\documentclass[12pt,reqno]{article}

\usepackage{amsthm,amsfonts,amssymb,amsmath,epsf, verbatim}

\usepackage{fancyhdr}
\pagestyle{fancy}
\fancyhf{}
\fancyhead[R]{\thepage}

\setlength{\textwidth}{13.5cm}
\setlength{\oddsidemargin}{.1in}
\setlength{\topmargin}{-.5in}
\setlength{\textheight}{8.7in}

\newtheorem{theorem}{Theorem}
\newtheorem{lemma}[theorem]{Lemma}
\newtheorem{corollary}[theorem]{Corollary}


\linespread{1.5}

\begin{document}

\title{Upper bounds on the second largest prime factor of an odd perfect number}

\author{Joshua Zelinsky}
\date{}

\maketitle
\vspace{-1 cm}

\begin{center}
Iowa State University\\

Email:joshuaz1@iastate.edu, zelinsky@gmail.com
\end{center}

\begin{abstract} Acquaah and Konyagin showed that if $N$ is an odd perfect number with prime factorization $N= p_1^{a_1}p_2^{a_2} \cdots p_k^{a_k}$ where $p_1 < p_2 \cdots < p_k$, then one must have $p_k < 3^{1/3}N^{1/3}$. Using methods similar to theirs, we show that $p_{k-1}< (2N)^{1/5}$ and that $p_{k-1}p_k < 6^{1/4}N^{1/2}.$ We also show that if $p_k$ and $p_{k-1}$ are close to each other then these bounds can be further strengthened. 
    
\end{abstract}

Throughout this paper we will assume $N$ is an odd perfect number with $N= p_1^{a_1}p_2^{a_2} \cdots p_k^{a_k}$ where $p_1 < p_2 \cdots < p_k$ are all prime.  Acquaah and Konyagin \cite{AK} showed that one must have 
\begin{equation}
\label{p_k AK bound} p_k < 3^{1/3}N^{1/3}.
\end{equation}

We recall Euler's result that if $N$ is an odd perfect number we may write $N=q^eM^2$ where $q$ is prime, $q \equiv e \equiv 1$ (mod 4), and $(q,M)=1$. Equivalently, Euler's result states that all the $p_k$ are raised to an even power, except for a single prime $p_i$ where $p_i \equiv a_i \equiv i$ (mod 4). We will refer to this single prime raised to a $1$ (mod 4) power as the special prime.\footnote{Some authors call $q$ the ``Euler prime.'' A better name than the Euler prime in fact would be the Cartesian prime since prior to Euler's result Descartes proved that an odd perfect number needed to have exactly one prime factor raised to an odd power. In any event, the term  special prime avoids any issues of priority.}

Acquaah and Konyagin proved their result by showing that for any prime $p_i$ which is not the special prime one has $p_i < 2^{1/4}N^{1/4}$, and most of their work focuses on the possibility that $p_k$ is the special prime. Our situation is similar, although we will need to examine both the situation where  $p_k$ is the special prime as well as the situation where $p_{k-1}$ is the special prime.\footnote{Starni's recent paper \cite{Starni} claims a  very tight upper bound on the size of the special prime in general, but the author was unable to follow the proof of Corollary 2.2 in his paper which gives that result. Therefore, none of the results in this paper rely on that one.} 

It also follows immediately from their results that one has that 

\begin{equation}
\label{p_k-1 AK bound} p_{k-1} < 2^{1/4}N^{1/4}.
\end{equation}

We are interested in proving similar bounds about the second largest prime factor, $p_{k-1}$. Note that a lower bound of $p_{k-1} > 10^4$ is due to Iannucci \cite{Iannucci}. 

 In this article we prove 
\begin{equation} 
\label{New p_k-1 bound} p_{k-1}< (2N)^{1/5},     
\end{equation}
and 
\begin{equation}
\label{p_kp_k-1 bound} p_{k-1}p_k < 6^{1/4}N^{1/2}.     
\end{equation}

Note that Inequality \ref{p_kp_k-1 bound} is not a direct consequence of Inequality \ref{p_k AK bound} and Inequality \ref{New p_k-1 bound} since those two equations together would just yield $$ p_{k-1}p_k < 2^{1/5}3^{1/3}N^{\frac{8}{15}}.$$ It is also interesting to compare Inequality \ref{p_kp_k-1 bound} to the bound of Luca and Pomerance  \cite{LucaPomerance}  who proved that

\begin{equation}\label{Pom Luca bound}p_1p_2p_3 \cdots p_{k-1}p_k < 2N^{\frac{17}{26}}. \end{equation}

Of course, $\frac{1}{2}$ is less than $\frac{17}{26}$, but the left hand of Inequality \ref{p_kp_k-1 bound} only has $p_{k-1}p_k$ as opposed to the product of all the primes dividing $N$ which appears in Inequality \ref{Pom Luca bound}.\\

 As with Acquaah and Konyagin's result, the fact that for any prime $p$, $\sigma(p^a)$ and $p^a$ must be relatively prime, will play a critical role in our results

Before proving our main theorem we need the following lemma:
\begin{lemma}\label{p|q+1 and q|p^2+p+1 lemma} If $p$ and $q$ are positive odd integers such that $q \mid (p^2 + p + 1)$, and $p \mid (q + 1)$, then we must have $(p, q) = (1, 1)$ or $(p, q) = (1, 3)$.
\end{lemma}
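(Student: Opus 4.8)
The plan is to extract enough arithmetic information from the two divisibility conditions to trap $p$ (and hence $q$) in a finite range, and then finish by inspection. First I would dispose of the degenerate case $p = 1$: here $p^2 + p + 1 = 3$, the condition $p \mid q + 1$ is vacuous, and $q \mid 3$ together with $q$ odd and positive forces $q \in \{1, 3\}$, recovering exactly the two claimed solutions. So the real content is to show that no solution exists once $p \geq 3$.

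For $p \geq 3$, I would introduce the two cofactors explicitly: write $q + 1 = mp$ coming from $p \mid q + 1$, and $p^2 + p + 1 = qs$ coming from $q \mid p^2 + p + 1$, with $m, s$ positive integers. The key observation is that parity forces $m$ to be even: since $q$ is odd, $q + 1$ is even, and since $p$ is odd this means $m$ is even, so $m \geq 2$ and therefore $q \geq 2p - 1$. This improved lower bound is exactly what makes the argument work; the weaker bound $q \geq p - 1$ coming from $q + 1 \geq p$ alone turns out to be too weak to force a contradiction.

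The second ingredient is a congruence on the other cofactor $s$. Reducing $p^2 + p + 1 = qs$ modulo $p$ and using $q \equiv -1 \pmod{p}$ gives $s \equiv -1 \pmod{p}$; since $s \geq 1$ and $p \geq 3$, this forces $s \geq p - 1$. Combining the two bounds yields $p^2 + p + 1 = qs \geq (2p - 1)(p - 1)$, which simplifies to $p^2 - 4p \leq 0$, i.e.\ $p \leq 4$. Hence the only remaining possibility is $p = 3$.

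Finally I would settle $p = 3$ by direct inspection: $p^2 + p + 1 = 13$ is prime, so $q \mid 13$ together with $q \geq 2p - 1 = 5$ forces $q = 13$ and $s = 1$, contradicting $s \geq p - 1 = 2$. This rules out $p = 3$ and completes the argument. I expect the main obstacle to be locating the right pair of inequalities: the parity-driven bound $q \geq 2p - 1$ and the modular bound $s \geq p - 1$ must be used \emph{together}, and one must be careful that the resulting inequality $p \leq 4$ is not strict at $p = 3$, so that small case requires the separate finite check above rather than following from the inequality alone.
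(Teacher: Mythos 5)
Your proof is correct; every step checks out, including the parity bound $q \geq 2p-1$, the congruence $s \equiv -1 \pmod{p}$ giving $s \geq p-1$, and the final check at $p=3$. However, it is organized differently from the paper's argument. The paper substitutes $q = pm-1$ into $q \mid p^2+p+1$, adds $q$ to obtain $q \mid p^2+p+pm = p(p+m+1)$, strips the factor $p$ (coprime to $q$) to get $q \mid p+m+1$, and then uses parity ($q$ odd, $p+m+1$ even) to conclude $2(pm-1) \leq p+m+1$, hence $p \leq \frac{m+3}{2m-1} \leq 4$, finishing with the same finite check of $p \in \{1,3\}$. You instead keep the cofactor $s$ of $q$ in $p^2+p+1 = qs$ explicit and multiply two separate lower bounds: $q \geq 2p-1$ from parity and $s \geq p-1$ from the congruence modulo $p$. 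The two routes are close cousins --- your congruence on $s$ encodes the same information as the paper's $q \mid p+m+1$, and both land on $p \leq 4$ --- but yours replaces the divisibility manipulation and the ``odd divisor of an even number doubles'' trick with a product of two lower bounds, which is arguably more systematic and easier to verify line by line. One further remark: both arguments leave slack on the table. In the paper, $m$ even means $m \geq 2$, so its inequality already forces $p \leq 5/3$, i.e.\ $p=1$ with no need for the $p=3$ case; in yours, $s$ must be odd (since $p^2+p+1$ and $q$ are odd), and an odd number congruent to $-1$ modulo an odd $p$ cannot equal $p-1$, so in fact $s \geq 2p-1$, and then $(2p-1)^2 \leq p^2+p+1$ forces $p=1$ directly. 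Either refinement would let you delete the final paragraph of your argument.
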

\begin{proof} Assume that $p$ and $q$ are positive odd integers $q$ such that $q|p^2+p+1$ and $p|q+1$. So there is an $m$ such that $pm=q+1$ and we may then write $q=pm-1$. By assumption, We have  
$$pm-1|p^2+p+1$$ and hence 
$$pm-1|p^2 + p+1 + pm-1 = p(p+m+1).$$
Since $(pm-1,p)=1$ we have then that $$pm-1|p+m+1.$$ Note that $pm-1=q$ is odd, and hence $m$ is even. Thus $p+m+1$ is even, and so $2|p+m+1$. We have $$2(pm-1)|p+m+1.$$ Hence, $2pm-2 \leq p+m+1$ and 
so $$p \leq \frac{m+3}{2m-1} \leq  4.$$ From the last inequality, we must have $p=1$ or $p=3$. If $p=1$,  then $q|1^2+1+1=3$, and so $q=1$ or $q=3$. If $p=3$, then $q|3^2+3+1$, and hence $q=1$ or $q=13$, neither of which satisfies $p|q+1$. 

\end{proof}

We will first prove Inequality \ref{p_kp_k-1 bound} and then prove Inequality \ref{New p_k-1 bound}. 

\begin{theorem} We have $$p_{k-1}p_k < 2^{1/4}3^{1/4}N^{1/2}.$$
\end{theorem}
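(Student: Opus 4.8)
The plan is to reduce the theorem to a single delicate configuration and then rule that configuration out using the Lemma above. First I would dispose of the easy case: if $p_k$ is \emph{not} the special prime, then Acquaah and Konyagin's bound gives $p_k < 2^{1/4}N^{1/4}$, and since $p_{k-1}<p_k$ we obtain $p_{k-1}p_k < 2^{1/2}N^{1/2} = 4^{1/4}N^{1/2} < 6^{1/4}N^{1/2}$. So I may assume $p_k$ is special; then $p_{k-1}$ is not special and $p_{k-1}<2^{1/4}N^{1/4}$, while Euler's form forces $a_k \equiv 1 \pmod 4$. The engine throughout is that for any prime power $p^a \parallel N$ the numbers $p^a$ and $\sigma(p^a)$ are coprime and both divide $2N$, so $p^a\sigma(p^a)\mid 2N$; applied to $p_k$ this gives $p_k^{2a_k} < p_k^{a_k}\sigma(p_k^{a_k}) \le 2N$. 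When $a_k\ge 5$ this already yields $p_k<(2N)^{1/10}$, far more than enough when combined with $p_{k-1}<2^{1/4}N^{1/4}$. Hence the only case needing work is $a_k=1$, where $\sigma(p_k)=p_k+1$ and $p_k(p_k+1)\mid 2N$.

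In the case $a_k=1$, set $e=a_{k-1}$ (even, so $e\ge 2$) and $v=v_{p_{k-1}}(p_k+1)$; since $p_k+1\mid 2N$ we always have $v\le e$. If $v\le e-2$, then $\gcd(p_{k-1}^e,\,p_k(p_k+1))=p_{k-1}^{v}$, so $\lcm(p_{k-1}^e,\,p_k(p_k+1))=p_{k-1}^{e-v}p_k(p_k+1)$ divides $2N$; as $e-v\ge 2$ and $p_k+1>p_k$ this exceeds $p_{k-1}^2p_k^2$, giving $p_{k-1}^2p_k^2<2N<6^{1/2}N$ and hence the theorem. (The generic situation $p_{k-1}\nmid p_k+1$, i.e. $v=0$, falls here.) It remains to handle $v\ge e-1$.

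So suppose $v\ge e-1$. Because $p_k\parallel N$, exactly one factor $\sigma(p_i^{a_i})$ with $i\ne k$ is divisible by $p_k$; call its index $j$. If $j\ne k-1$, then $p_k\mid \sigma(p_j^{a_j})$ forces $\sigma(p_j^{a_j})\ge p_k$, whence $p_j^{a_j}>\tfrac{2}{3}p_k$ (using $p_j\ge 3$), and $p_k$, $p_{k-1}^e$, $p_j^{a_j}$ are three coprime prime powers dividing $N$, so $N\ge p_k\cdot p_{k-1}^2\cdot \tfrac{2}{3}p_k$, i.e. $p_{k-1}^2p_k^2<\tfrac{3}{2}N<6^{1/2}N$. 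Thus I may assume $j=k-1$, that is $p_k\mid \sigma(p_{k-1}^e)$, together with $p_{k-1}^{e-1}\mid p_k+1$.

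The heart of the matter, and the step I expect to be the main obstacle, is showing this last configuration cannot occur. For $e=2$ it is exactly the hypothesis of the Lemma above with $(p,q)=(p_{k-1},p_k)$, whose conclusion $(p_{k-1},p_k)\in\{(1,1),(1,3)\}$ is impossible since $p_{k-1}>10^4$. For $e\ge 3$ I would argue directly: from $p_k\mid \sigma(p_{k-1}^e)$ and $\gcd(p_k,\,p_{k-1}-1)=1$ one gets $p_{k-1}^{e+1}\equiv 1 \pmod{p_k}$, while writing $p_k+1=p_{k-1}^{e-1}s$ gives $p_{k-1}^{e-1}s\equiv 1\pmod{p_k}$; cancelling the invertible factor $p_{k-1}^{e-1}$ yields $p_{k-1}^2\equiv s\pmod{p_k}$. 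A short estimate from $p_k\le \sigma(p_{k-1}^e)$ bounds $s=(p_k+1)/p_{k-1}^{e-1}$ by $p_{k-1}+2$, so $1\le s<p_{k-1}^2<p_k$ (the last inequality because $p_{k-1}^{e-1}\mid p_k+1$ with $e\ge 3$ forces $p_k>p_{k-1}^2$), and then $p_{k-1}^2\equiv s\pmod{p_k}$ forces $s=p_{k-1}^2$, a contradiction. I expect the only fiddly points to be verifying $p_k>p_{k-1}^2$ and pinning down the constant in the estimate for $s$; every surviving case delivers $p_{k-1}^2p_k^2<2N$, comfortably inside the claimed $6^{1/2}N$.
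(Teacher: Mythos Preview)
Your argument is correct, and it follows a genuinely different decomposition from the paper's. In the critical situation ($p_k$ special, $a_k=1$) the paper first reduces to $a_{k-1}=2$ by the crude bound $p_{k-1}^8<2N$ when $a_{k-1}\ge 4$, then invokes the Lemma as a \emph{dichotomy}: either $p_{k-1}\nmid p_k+1$ or $p_k\nmid\sigma(p_{k-1}^2)$, and in either branch one obtains $p_{k-1}^4p_k<2N$; the conclusion $p_{k-1}p_k<6^{1/4}N^{1/2}$ then comes from optimizing this against the Acquaah--Konyagin bound $p_k<3^{1/3}N^{1/3}$, which is precisely where the constant $6^{1/4}$ enters. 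You instead introduce two auxiliary parameters---the $p_{k-1}$-adic valuation $v$ of $p_k+1$ and the unique index $j$ with $p_k\mid\sigma(p_j^{a_j})$---and show that every branch either yields $p_{k-1}^2p_k^2<2N$ directly (via an $\lcm$ argument when $v\le e-2$, or via the extra coprime factor $p_j^{a_j}>\tfrac{2}{3}p_k$ when $j\ne k-1$) or is impossible (the Lemma for $e=2$, your congruence argument for $e\ge 4$; note $e$ is even, so $e\ge 4$ there and $p_{k-1}^{3}\mid p_k+1$ gives the needed $p_k>p_{k-1}^2$). Your route is longer and brings in more structure, but it never appeals to the specific constant in Acquaah--Konyagin and in fact delivers the sharper bound $p_{k-1}p_k<\sqrt{2}\,N^{1/2}$ rather than $6^{1/4}N^{1/2}$; the paper's route is shorter and more uniform but pays for that with the weaker constant.
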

\begin{proof}

We will split our proof into two cases: Case I is where $p_k$ is the special prime; Case II is where $p_k$ is not the special prime. 

We first consider Case I where $p_k$ is the special prime. Note that if $a_k>1$, then one must have $a_k \geq 5$ and hence  $p_k^5 < N$. From this we have that $p_{k-1}<N^{1/5}$, and hence $$p_{k-1}p_k < p_k^2< N^{2/5} < 2^{1/4}3^{1/4}N^{1/2}.$$ Thus, we may assume that $a_k=1$. Since $p_k$ is the special prime, we must have that $a_{k-1}$  is even and that $a_{k-1} \geq 2$. If $a_{k-1} \geq 4$, since $(\sigma(p_{k-1}^{a_{k-1}}),p_{k-1}^{a_{k-1}})=1$ we conclude $$p_{k-1}^8< p_{k-1}^4\sigma(p_{k-1}^{4}) < 2N,$$ and hence that $p_{k-1} < 2^{1/8}N^{1/8}$ which combining with Inequality \ref{p_k AK bound}  would yield that 
$$p_{k-1}p_k < 2^{1/8}3^{1/3}N^{\frac{11}{24}} < 2^{1/4}3^{1/4}N^{1/2}.$$
Hence, we may assume that $a_{k-1} = 2 $. Then by Lemma \ref{p|q+1 and q|p^2+p+1 lemma}, we have either $p_{k-1} \not | (p_k+1)$ or $p_k \not|(p_{k-1}^2 + p_{k-1} + 1) $. We shall label the first of these two Case Ia, and the second Case Ib.

In Case Ia, since $p_{k-1} \not | (p_k+1)$ we have 
$$p_{k-1}^2(p_k+1)\sigma(p_{k-1}^2)|2N, $$ and hence
$$p_{k-1}^4p_k < p_{k-1}^2(p_k+1)\sigma(p_{k-1}^2) \leq 2N.$$

We have then $p_{k-1}^4p_k<2N$. If we then set $p_k = N^{\alpha}$ for some real number $\alpha$ we have that $p_{k-1}^4 < 2N^{1-\alpha}$, and hence $$p_{k-1}< 2^{\frac{1}{4}}N^{\frac{1-\alpha}{4}}.$$
We have then that 

$$p_{k-1}p_k < 2^{\frac{1}{4}}N^{\frac{1-\alpha}{4}}N^{\alpha} = 2^{1/4}N^{\frac{1}{4} + \frac{3}{4}\alpha}. $$

The far right-hand side of the above equation is increasing in $\alpha$, and hence largest when $\alpha$ as large as possible, namely when we have $N^\alpha= 3^{1/3}N^{1/3}$, in which case we obtain that 
$p_{k-1}p_k < 2^{1/4}3^{1/4}N^{1/2}$. 

In Case Ib, we have that $p_k \not | (p_{k-1}^2 + p_{k-1} + 1)$, and hence 
$$\sigma({p_{k-1}}^2){p_k}{p_{k-1}}^2|2N$$ and hence $$p_{k-1}^4p_k < 2N$$ and we then continue just as in Case Ia. \\  
We now consider Case II, where $p_k$ is not the special prime. Then following logic similar to that in \cite{AK}, since $(\sigma(p_k^{a_k}),p_k^{a_k})=1$ one has $$\sigma(p_k^{a_k})p_k^{a_k} | 2N,$$ and hence $p_k^4 < 2N$, and so $p_k < 2^{1/4}N^{1/4}$. Since $p_{k-1} < p_k$, we have that $$p_{k-1}p_k < (2^{1/4}N^{1/4})^2 = 2^{1/2}N^{1/2} <  2^{1/4}3^{1/4}N^{1/2}.$$
\end{proof}

We now prove

\begin{theorem} $$p_{k-1}< (2N)^{1/5}.$$
\end{theorem}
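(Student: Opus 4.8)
The plan is to mirror the structure of the previous theorem, splitting into cases according to whether $p_k$ or $p_{k-1}$ is the special prime, and then according to the multiplicity $a_{k-1}$. The target bound $p_{k-1} < (2N)^{1/5}$ strongly suggests that the generic mechanism is the inequality $p_{k-1}^5 < 2N$, so in most branches I would try to produce a factor of $N$ that contains $p_{k-1}$ to the fifth power (or better). First I would dispose of the easiest situation: if $p_{k-1}$ is \emph{not} the special prime, then $a_{k-1}$ is even and at least $2$, so $p_{k-1}^2 \sigma(p_{k-1}^2)$ divides $2N$ by the coprimality of $\sigma(p_{k-1}^{a_{k-1}})$ with $p_{k-1}^{a_{k-1}}$; this gives $p_{k-1}^4 < 2N$, which is already far stronger than $(2N)^{1/5}$. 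The real work is therefore concentrated in the case where $p_{k-1}$ \emph{is} the special prime, forcing $a_{k-1} \equiv 1 \pmod 4$ and leaving $p_k$ with an even exponent $a_k \ge 2$.

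In that main case, I would first handle large exponents: if $a_{k-1} \ge 5$ then $p_{k-1}^5 \le p_{k-1}^{a_{k-1}} < N$ immediately, so the interesting subcase is $a_{k-1} = 1$. Here $p_{k-1}$ is special with exponent $1$ and $p_k$ has even exponent $a_k \ge 2$; if $a_k \ge 4$ I would use $p_k^4 \sigma(p_k^4) \mid 2N$ together with $p_{k-1} < p_k$ to conclude, so the crux reduces to $a_{k-1}=1$, $a_k = 2$. Now I want to extract $p_{k-1}$ to a high power out of $N$. The natural divisor to examine is $\sigma(p_k^2) = p_k^2 + p_k + 1$, and I would invoke Lemma~\ref{p|q+1 and q|p^2+p+1 lemma} with the roles $p = p_{k-1}$ and $q = p_k$: unless $(p_{k-1},p_k)$ is one of the trivial exceptional pairs, we cannot simultaneously have $p_{k-1} \mid (p_k+1)$ and $p_k \mid (p_{k-1}^2+p_{k-1}+1)$.

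The idea is then to build up the product $p_{k-1}\,p_k^2\,\sigma(p_k^2)\,\sigma(p_{k-1})$ or a similar combination, using the lemma to guarantee that the various factors coming from $\sigma(p_k^2)$ and from $\sigma(p_{k-1})=p_{k-1}+1$ are coprime enough that their product still divides $2N$; combining $p_k \mid \sigma(\text{something})$ with the coprimality constraints should yield enough copies of $p_{k-1}$ and $p_k$ that, after applying the Acquaah--Konyagin bound \eqref{p_k AK bound} on $p_k$ to trade powers of $p_k$ for powers of $N$, one lands on $p_{k-1}^5 < 2N$. I expect the main obstacle to be precisely this bookkeeping: arranging the divisibility relations so that a genuinely fifth-power-in-$p_{k-1}$ factor of $2N$ emerges, while correctly separating the cases governed by the lemma (the analogue of Cases Ia and Ib above) and ensuring no factor is double-counted when $p_{k-1}$ or $p_k$ happens to divide one of the $\sigma$ values. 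The substitution $p_k = N^\alpha$ with $\alpha \le 1/3$, exactly as in the previous proof, should then convert the mixed bound into the clean statement $p_{k-1} < (2N)^{1/5}$.
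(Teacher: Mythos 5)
Your proposal contains a fatal error in the step you treat as the ``easiest situation.'' When $p_{k-1}$ is not the special prime and $a_{k-1}=2$, the divisibility $p_{k-1}^2\sigma(p_{k-1}^2)\mid 2N$ gives $p_{k-1}^4<2N$, i.e.\ $p_{k-1}<(2N)^{1/4}$. Since $2N>1$ we have $(2N)^{1/4}>(2N)^{1/5}$, so this bound is \emph{weaker} than the one being proved, not ``far stronger'': it is essentially the known Acquaah--Konyagin-type bound of Inequality \ref{p_k-1 AK bound}, which the theorem is supposed to improve. Consequently the case you dismiss in one line ($p_{k-1}$ not special with $a_{k-1}=2$) is left completely open, and it is exactly where the paper does most of its work. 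There the paper splits further: if $p_k$ is the special prime (its Case IIa), one reduces to $a_k=1$, $a_{k-1}=2$ and applies Lemma \ref{p|q+1 and q|p^2+p+1 lemma} to conclude that $p_k\mid(p_{k-1}^2+p_{k-1}+1)$ and $p_{k-1}\mid(p_k+1)$ cannot both hold, each branch then yielding a divisor of $2N$ (either $\sigma(p_{k-1}^2)p_{k-1}^2p_k$ or $(p_k+1)\sigma(p_{k-1}^2)p_{k-1}^2$) that exceeds $p_{k-1}^5$; if neither $p_k$ nor $p_{k-1}$ is special (Case IIb), one reduces to $a_k=a_{k-1}=2$ and uses the size observation $p_k^2>p_{k-1}^2+p_{k-1}+1$ to see that $p_k^2\nmid\sigma(p_{k-1}^2)$, whence $p_k\sigma(p_{k-1}^2)p_{k-1}^2\mid 2N$ and again $p_{k-1}^5<2N$. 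No appeal to Lemma \ref{p|q+1 and q|p^2+p+1 lemma} alone can be avoided here; the fourth-power estimate simply does not suffice.

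There is also a mismatch in the one case you do engage with. In your crux case ($p_{k-1}$ special with $a_{k-1}=1$, and $a_k=2$), the divisors of $2N$ in play are $\sigma(p_{k-1})=p_{k-1}+1$ and $\sigma(p_k^2)=p_k^2+p_k+1$, so the dichotomy you need is ``not both $p_k\mid(p_{k-1}+1)$ and $p_{k-1}\mid(p_k^2+p_k+1)$,'' which is Lemma \ref{p|q+1 and q|p^2+p+1 lemma} with $p=p_k$ and $q=p_{k-1}$. The orientation you invoke ($p=p_{k-1}$, $q=p_k$) instead rules out the simultaneous divisibilities $p_{k-1}\mid(p_k+1)$ and $p_k\mid(p_{k-1}^2+p_{k-1}+1)$, which concern $\sigma(p_k)$ and $\sigma(p_{k-1}^2)$ --- quantities that do not occur at all when $a_{k-1}=1$ and $a_k=2$ (that orientation is the one needed in the subcase you discarded, where $p_k$ is special and $a_{k-1}=2$). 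Finally, once the orientation is fixed, your closing machinery is unnecessary: there is no need for the Acquaah--Konyagin bound or the substitution $p_k=N^\alpha$, because in one branch $\sigma(p_k^2)p_k^2p_{k-1}\mid 2N$ and in the other $(p_{k-1}+1)\sigma(p_k^2)p_k^2\mid 2N$, and since $p_k>p_{k-1}$ each of these products already exceeds $p_{k-1}^5$, giving $p_{k-1}^5<2N$ directly.
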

\begin{proof} Our method of proof is very similar to our method of proof above, but with a slightly different case breakdown. Case I will be when $p_{k-1}$ is the special prime. In Case II, when $p_{k-1}$ is not special, we will break this down into two cases. Case IIa is when $p_k$ is special and Case IIb is  when neither $p_k$ nor $p_{k-1}$ is the special prime. \\

Let us consider Case I, where $p_{k-1}$ is special.  Since $p_k$ is not special, $a_k$ must be even. If $a_k \geq 4$ then since we have that $\sigma(p_k^{a_k})p_k^{a_k}|2N$, this gives us $$p_k^8 < \sigma(p_k^{a_k})p_k^{a_k} \leq 2N.$$ We have then that
$p_k<2^{1/8}N^{1/8},$ and so the desired bound on $p_{k-1}$ follows.  We may thus assume that $a_k=2$. We again use Lemma \ref{p|q+1 and q|p^2+p+1 lemma} to conclude that either $p_{k-1} \not| (p_k^2 +p_k +1)$ or 
$p_k \not | (p_{k-1}+1)$. In the case that $p_k \not| p_{k-1}+1$,  we have that 
$$p_{k-1}^5 < (p_{k-1}+1)\sigma(p_k^2)p_{k-1}p_k|\sigma(2N), $$ from which the desired inequality follows. 
Similarly, in the case that $p_{k-1}$ does not divide $p_k^2 +p_k +1$, then we have that 
$p_{k-1}^5 < \sigma(p_k^2)p_{k-1}p_k^2|2N $. \\

Case IIa  where $p_k$ is the special prime is nearly identical to Case I.\\

Now consider Case IIb, where neither $p_k$ nor $p_{k-1}$ is special. By logic similar to that in Case I, we may assume that $$a_{k-1}=a_k=2.$$ We note that $$p_k^2 > p_{k-1}^2 + p_{k-1} +1, $$ and hence $p_k^2 \not | p_{k-1}^2 + p_{k-1} + 1$. We have 
$$p_k\sigma(p_{k-1}^2)p_{k-1}^2 |2N,$$ and so we have that $$p_{k-1}^5 < p_k\sigma(p_{k-1}^2)p_{k-1}^2 \leq 2N $$ from which the desired inequality follows. 
 
\end{proof}

Based on the ease of proving stronger inequalities for $p_{k-1}$ than for $p_k$ it seems natural to ask if we can prove that $p_k$ must be much larger than $p_{k-1}$. Right now, it is not obvious how to even rule out a situation like $p_k=p_{k-1}+2$. However we can prove the following weak result:

\begin{theorem} If $p_k < p_{k-1} + \sqrt{3p_{k-1}} -2,$ then we have $$p_{k-1} < 2^{\frac{1}{6}}N^{\frac{1}{6}}.$$
\label{p_k and p_k-1 close result}
\end{theorem}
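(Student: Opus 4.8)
The plan is to reduce to the situation where both $p_{k-1}$ and $p_k$ occur to small exponents, and then to build, out of $\sigma$-values and prime powers dividing $N$, a product of pairwise coprime divisors of $2N$ whose size exceeds $p_{k-1}^6$; that gives $p_{k-1}^6 < 2N$, which is the claim.

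First I would dispose of large exponents. Writing $a$ for the exponent of $p_{k-1}$, coprimality of $p_{k-1}^{a}$ and $\sigma(p_{k-1}^{a})$ gives $p_{k-1}^{2a} < p_{k-1}^{a}\sigma(p_{k-1}^{a}) \le 2N$, so if $a \ge 3$ we already have $p_{k-1}^6 < 2N$. The same argument applied to $p_k$, together with $p_{k-1} < p_k$, settles the theorem whenever the exponent of $p_k$ is at least $3$. Hence I may assume each of $p_{k-1},p_k$ has exponent $1$ if it is the special prime and $2$ otherwise. Since there is a single special prime, this leaves exactly three cases: (a) $p_{k-1}$ special with $a_{k-1}=1$ and $a_k=2$; (b) $p_k$ special with $a_k=1$ and $a_{k-1}=2$; (c) neither special, $a_{k-1}=a_k=2$.

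The heart of the matter, and where I expect the main obstacle to lie, is a pair of non-divisibilities that is exactly where the hypothesis $p_k < p_{k-1}+\sqrt{3p_{k-1}}-2$ is used. Put $d = p_k - p_{k-1}$, an even positive integer; the hypothesis is precisely $(d+2)^2 < 3p_{k-1}$, hence $d^2+d+1 < 3p_{k-1}$. I claim $p_{k-1}\nmid p_k^2+p_k+1$ and $p_k\nmid p_{k-1}^2+p_{k-1}+1$. For the first, reducing modulo $p_{k-1}$ gives $p_k^2+p_k+1\equiv d^2+d+1$, so a divisibility would force $d^2+d+1\in\{p_{k-1},2p_{k-1}\}$ by the size bound; but $d^2+d+1$ is odd, which rules out $2p_{k-1}$, and $d^2+d+1=p_{k-1}$ would give $p_k=p_{k-1}+d=(d+1)^2$, impossible for a prime. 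The second is symmetric: modulo $p_k$ one gets $d^2-d+1$, again odd and less than $3p_k$, and the value $p_k$ would force $p_{k-1}=(d-1)^2$, impossible.

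Finally I would assemble the bound in each case. I use that $p_{k-1}^{a_{k-1}}p_k^{a_k}\mid N$, that $\sigma(p_{k-1}^{a_{k-1}})\sigma(p_k^{a_k})$ divides $\sigma(N)=2N$ by multiplicativity, and that $p_{k-1}\nmid p_k+1$ (since $p_k+1\equiv d+1$ with $0<d+1<p_{k-1}$) and $p_k\nmid p_{k-1}+1$ (since $0<p_{k-1}+1<p_k$). In case (c) I take $p_{k-1}^2\,\sigma(p_{k-1}^2)\,\sigma(p_k^2)$; in case (a) the product $p_{k-1}\,p_k^2\,\sigma(p_{k-1})\,\sigma(p_k^2)$; in case (b) the product $p_{k-1}^2\,p_k\,\sigma(p_{k-1}^2)\,\sigma(p_k)$. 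In each case the two prime-power factors are coprime to one another and, by the non-divisibilities above, coprime to each $\sigma$-factor, so the whole product divides $2N$. A factor-by-factor comparison, in which every factor exceeds the matching power of $p_{k-1}$ because $p_k>p_{k-1}$ and $\sigma(p^a)>p^a$, shows the product exceeds $p_{k-1}^6$, giving $p_{k-1}^6<2N$, that is $p_{k-1}<2^{1/6}N^{1/6}$.
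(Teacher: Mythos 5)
Your proposal is correct and takes essentially the same route as the paper: dispose of large exponents via $p^{a}\sigma(p^{a})\mid 2N$, use the closeness hypothesis to rule out $p_{k-1}\mid \sigma(p_k^{a_k})$ and $p_k\mid \sigma(p_{k-1}^{a_{k-1}})$ (and $p_{k-1}\nmid p_k+1$), and then assemble a pairwise-coprime product dividing $2N$ that exceeds $p_{k-1}^6$. The only difference is presentational: your inline congruence argument (reduce to $d^2\pm d+1$, rule out the even multiple by parity and the exact value by the perfect-square contradiction) is precisely the contrapositive of the paper's Lemma \ref{p^2 + p +1 does not have any divisors near its square root}.
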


To prove this theorem we need the following lemma:

\begin{lemma} Let $p$ be an odd prime and assume that $q>3$ is a prime such that $q|(p^2+p+1)$ then one has $|p-q|> \sqrt{3p}-2.$
\label{p^2 + p +1 does not have any divisors near its square root}
\end{lemma}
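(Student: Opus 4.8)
The plan is to reduce the divisibility condition to a statement about a small nearby quantity and then extract the lower bound on $|p-q|$ from a divisibility-plus-parity argument. Writing $t = q - p$ (which is nonzero, since $q \mid p^2+p+1$ forces $q \neq p$ as $p^2+p+1 \equiv 1 \pmod p$), I would substitute $p \equiv -t \pmod q$ into $p^2+p+1$ to obtain
$$ 0 \equiv p^2 + p + 1 \equiv t^2 - t + 1 \pmod{q},$$
so that $q \mid t^2 - t + 1$. Since $X^2 - X + 1$ has negative discriminant, $t^2 - t + 1$ is a positive integer for every integer $t$, so I may write $t^2 - t + 1 = mq$ with $m \geq 1$ a positive integer. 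The whole proof then turns on pinning down how small $m$ can be.

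The key step, and the one that produces the crucial factor of $3$ under the radical, is to show $m \geq 3$. First, $m = 1$ would give $t^2 - t + 1 = q = p + t$, i.e.\ $p = (t-1)^2$, impossible because a prime is never a perfect square. Second, I would rule out $m = 2$ by parity: $p$ is odd and $p^2+p+1$ is odd, so the divisor $q$ is odd, hence $t = q - p$ is even, which makes $t^2 - t + 1$ odd; an odd number cannot equal $2q$, so $m \neq 2$ (indeed $m$ must itself be odd). Therefore $m \geq 3$.

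With $m \geq 3$ the conclusion is a short computation: from $t^2 - t + 1 \geq 3q = 3(p+t)$ I get $t^2 - 4t + 1 \geq 3p$, hence $(t-2)^2 \geq 3p + 3 > 3p$ and so $|t-2| > \sqrt{3p}$. Splitting on the sign of $t$, when $q < p$ (so $t < 0$ and $|p-q| = -t$, giving $|t-2| = |p-q| + 2$) this reads $|p-q| + 2 > \sqrt{3p}$, i.e.\ exactly $|p-q| > \sqrt{3p} - 2$; when $q > p$ the sign forces $t - 2 > \sqrt{3p}$, yielding the even stronger $|p-q| > \sqrt{3p} + 2$. Either way the claimed inequality holds.

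I expect the only real subtlety to be the exclusion of $m = 2$: the naive divisibility bound $t^2 - t + 1 \geq q$ alone yields only something of size $\sqrt{p}$, which is too weak, so the factor of $3$ under the radical has to be recovered from arithmetic rather than from size considerations. The parity step is precisely what supplies it. Here the hypothesis $q > 3$ merely keeps $q$ away from the trivial small factor $3$; the oddness of $q$ that the parity argument actually uses is automatic, since $p^2 + p + 1$ is odd.
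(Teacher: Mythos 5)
Your proof is correct and is essentially the paper's own argument in mirror-image notation: with $t=q-p$ (the paper uses $k=p-q$) you arrive at the same auxiliary quantity $t^2-t+1=k^2+k+1$, rule out $m=1$ by the same prime-is-not-a-square observation, get $m\geq 3$ by the same parity argument, and finish by completing the square. If anything, your completion $(t-2)^2 \geq 3p+3$ handles both signs of $t$ uniformly and cleanly, whereas the paper's step $k^2+4k+1<(k+1)^2$ is only valid for $k<0$ and should read $(k+2)^2$, which is exactly what your version fixes.
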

\begin{proof} Assume that $p$ and $q$ are primes such that $q|p^2+p+1$. The case of $p=3$ is easy to check so assume that $p>3$. Set $p=q+k$, so $q=p-k$.  We note then that $$k^2+k+1= p^2+p+1 - (p+k+1)(p-k),$$ and hence $p-k|k^2+k+1.$ If $p-k=k^2+k+1$ then we have $p=k^2+2k+1=(k+1)^2$. But $p$ is prime and so cannot be a perfect square.  We must then have $m(p-k)=k^2+k+1$. We note that $m$ is odd and greater than 1. So we have $m \geq 3$. We thus have
$$3(p-k) \leq k^2+k+1. $$ 
and hence $3p \leq k^2 +4k + 1  < (k+1)^2$. Therefore, $$\sqrt{3p} < |k+1| $$ from which the desired bound follows. 
\end{proof}

We are now in the position to prove Theorem \ref{p_k and p_k-1 close result}.
\begin{proof}  Assume that $p_k < p_{k-1} + \sqrt{3p_{k-1}} -1.$ We wish to show that $p_{k-1} < 2^{\frac{1}{6}}N^{\frac{1}{6}}.$ Note that this result follows easily if we have either $p_k^4|N$ or have $p_{k-1}^4|N$. Thus, we may assume that we have $a_k$ and $a_{k-1}$ are either 1 or 2. Let us consider the case where $a_k=a_{k-1}=2$. We have then by Lemma \ref{p^2 + p +1 does not have any divisors near its square root} that $$p_{k-1} \not|p_k^2+p_k +1=\sigma(p_k^2)$$ and we get the same for swapping $p_k$ and $p_{k-1}$. Hence we have that
$$(p_k^2+p_k+1)(p_{k-1}^2+p_{k-1}+1)p_{k-1}^2p_k^2 |2N.$$ We thus have $p_{k-1}^8 <  2N$, and hence
$$p_{k-1} < 2^{\frac{1}{8}}N^{\frac{1}{8}} < 2^{\frac{1}{6}}N^{\frac{1}{6}}. $$

We thus may assume that one of $p_k$ and $p_{k-1}$ is special and the other is raised to the second power. We will look at the case where $p_k$ is special (the other case is nearly identical). We note that we cannot have $p_{k-1}|p_k+1$ since this would force $p_{k-1} \leq \frac{p_k+1}{2}$ which would contradict our assumption.

From the above note and Lemma \ref{p^2 + p +1 does not have any divisors near its square root}, we have  $$(p_{k-1}^2+p_{k-1}+1)p_{k-1}^2p_k(p_k+1)|2N,$$ and so 
$$p_{k-1}^6 < (p_{k-1}^2+p_{k-1}+1)p_{k-1}^2p_k(p_k+1) \leq 2N.$$  From this last inequality our desired inequality immediately follows. 

\end{proof}

We also have the following assertion as an easy corollary of Theorem \ref{p_k and p_k-1 close result}

\begin{corollary} If  $p_k < p_{k-1} + \sqrt{3p_{k-1}} -2,$ then $$p_1p_2p_3 \cdots p_k < 3N^{\frac{7}{12}}, $$ and
$$p_{k-1}p_k < 2N^{\frac{1}{3}}.  $$\end{corollary}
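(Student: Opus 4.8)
The plan is to derive both inequalities from the conclusion $p_{k-1} < 2^{1/6}N^{1/6}$ of Theorem \ref{p_k and p_k-1 close result}, whose hypothesis is identical to that of the corollary, together with the fact that this same hypothesis pins $p_k$ close to $p_{k-1}$. Concretely, $p_k < p_{k-1} + \sqrt{3p_{k-1}} - 2$ combined with $p_{k-1} < 2^{1/6}N^{1/6}$ shows that $p_k$ too is at most $2^{1/6}N^{1/6}$ up to a lower-order correction. Iannucci's bound $p_{k-1} > 10^4$ together with $p_{k-1}^6 < 2N$ forces $N > 10^{24}/2$, so that correction is negligible against the deliberately generous constants $2$ and $3$ appearing in the claimed bounds.

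For $p_{k-1}p_k < 2N^{1/3}$, I would use the hypothesis to write $p_{k-1}p_k < p_{k-1}^2 + \sqrt{3}\,p_{k-1}^{3/2}$, then insert $p_{k-1}^2 < 2^{1/3}N^{1/3}$ and $p_{k-1}^{3/2} < 2^{1/4}N^{1/4}$ from the theorem. Since $N^{1/4} = N^{-1/12}N^{1/3}$ with $N^{-1/12}$ tiny for $N > 10^{24}/2$, the cross term is a negligible fraction of $N^{1/3}$, and as $2^{1/3} \approx 1.26$ there is ample room to stay below $2N^{1/3}$.

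For the product, write $R := p_1 p_2 \cdots p_k$. The one substantive step is a structural bound coming from Euler's form of $N$: each non-special prime occurs to an even power $\geq 2$ while the special prime $p_s$ occurs to a power $\geq 1$, so $R^2 = \prod_i p_i^2$ satisfies $R^2 \mid N p_s$ and hence $R \leq \sqrt{N p_s}$. Since $p_s \leq p_k < 2^{1/6}N^{1/6}(1 + o(1))$, this yields $R < 2^{1/12}N^{7/12}(1 + o(1))$, and the constant $3$ comfortably absorbs both $2^{1/12} \approx 1.06$ and the lower-order factor, giving $R < 3N^{7/12}$.

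The proof is essentially routine once the radical inequality $R^2 \leq N p_s$ is in hand; there is no serious obstacle, and the only point requiring care is confirming that the lower-order terms are genuinely dominated, which the smallness of $N^{-1/12}$ (a consequence of $p_{k-1} > 10^4$) guarantees.
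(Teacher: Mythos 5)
Your proof is correct and matches the intended derivation: the paper states this result as an easy corollary of Theorem \ref{p_k and p_k-1 close result} without writing out the details, and your route---combining $p_{k-1} < 2^{1/6}N^{1/6}$ with the closeness hypothesis to bound $p_k$, together with the standard consequence $(p_1p_2\cdots p_k)^2 \leq N p_s$ of Euler's form of an odd perfect number---is exactly how it is meant to follow. Your explicit numerics via Iannucci's bound $p_{k-1} > 10^4$ (hence $N > 10^{24}/2$) cleanly justify absorbing the lower-order terms into the generous constants $2$ and $3$.
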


Note that $\frac{7}{12} < \frac{17}{26}$, so the inequality for the radical in this corollary is stronger than the Luca and Pomerance bound albeit it requires the additional hypothesis that $p_k$ and $p_{k-1}$ are close.  \\

There are a variety of avenues for future work. We are optimistic that careful refinements of these sorts of arguments can further improve bounds on $p_k$ and $p_{k-1}$. We also suspect that recent results showing that an odd perfect number must have many repeated prime factors such as \cite{OchemRao1} and \cite{Zelinsky} may be used to improve either these results or the results of Acquaah and Konyagin for $p_k$. The primary barrier to doing so appears to be that one may have an odd perfect number with a prime $p$ and many distinct $p_i$ such that $\sigma(p_i^{a_i})$ is a power of $p$. This situation corresponds to systems of extremely restrictive Diophantine equations, so limiting how many such $p_i$ one can have may be possible.  It also may be possible to combine these sorts of bounds with the sort of estimates of the index of an odd perfect number as introduced by Dris\cite{Dris}\cite{DrisLuca}. \par 

One other potential direction to go in is to obtain non-trivial estimates for $p_{k-2}$, or more generally to give a bound on $p_{k-i}$ for $1\leq i\leq k$. One easily has from Euler's theorem about an odd perfect number that $$p_{k-i} < N^{\frac{1}{2i+1}}.$$ A bound for $p_{k-2}$ better than $N^{\frac{1}{5}}$ then is a natural goal since $N^{1/5}$ is the bound one gets immediately from applying Euler's theorem. 

{\bf Acknowledgements} The author would like to acknowledge the helpful comments from the referee. The author would also like to acknowledge Douglas McNeil  who noted that a prior version of this paper had an incorrect version of Lemma \ref{p^2 + p +1 does not have any divisors near its square root}. Helpful comments were also provided by Carl Pomerance.

\end{document}